\documentclass[12pt]{article}
\usepackage[top=1in,bottom=1in,left=1 in,right=.8in,footskip=0.4in]{geometry}
\usepackage{authblk,orcidlink}
\usepackage{amsfonts,amsmath,amssymb,amsthm}

\newtheorem{theorem}{Theorem}[section]
\newtheorem{lemma}[theorem]{Lemma}
\newtheorem{proposition}[theorem]{Proposition}

\theoremstyle{definition}
\newtheorem{definition}[theorem]{Definition}
\theoremstyle{remark}

\usepackage{mathrsfs}   
\usepackage{bm}         

\usepackage{cite}

\usepackage{graphicx}

\usepackage{hyperref}
\hypersetup{colorlinks=true,linkcolor=blue,citecolor=blue,urlcolor=blue}
\usepackage{microtype}

\usepackage{enumerate}

\newcommand{\rk}{\operatorname{rk}}
\newcommand{\crit}{\operatorname{crit}}
\newcommand{\up}{\operatorname{U}}
\newcommand{\dn}{\operatorname{D}}

\newcommand{\M}{\mathcal{M}}

\title{$2$-colourability of the maximum ranked elements of a combinatorially sphere-like ranked poset}

\author[a]{Anupam Mondal~\orcidlink{0000-0002-6547-4835}\thanks{\texttt{anupam.mondal@tcgcrest.org }}}

\author[b]{Sajal Mukherjee~\orcidlink{0009-0004-6959-4334}\thanks{\texttt{sajal.mukherjee@tcgcrest.org}}}

\author[c]{Pritam Chandra Pramanik~\orcidlink{0009-0000-8332-3347}\thanks{\texttt{pritam.pramanik.80@tcgcrest.org}}}

\affil[a,b,c]{\small Institute for Advancing Intelligence (IAI), TCG CREST, Kolkata--700091, West Bengal, India}
\affil[a,b]{\small Mathematical \& Information Science, Academy of Scientific and Innovative Research (AcSIR), Ghaziabad--201002, Uttar Pradesh, India}

\date{}

\begin{document}
	\maketitle
	\begin{abstract}
	We obtain a higher dimensional analogue of a classical theorem which states that a polygonally cellulated $2$-sphere in $\mathbb{R}^3$, such that each vertex has even degree, is $2$-face-colourable. In order to formulate our result, we introduce the notion of combinatorially sphere-like ranked posets, which are ranked posets that generalise combinatorial spheres. We prove that, in a combinatorially sphere-like ranked poset $S$ of rank $k$, if each element of rank $(k-2)$ is covered by an even number of elements, then the maximum ranked elements of $S$ admit a proper $2$-colouring, i.e., any two adjacent maximum ranked elements have different colours.
	\end{abstract}
	\noindent \textbf{Keywords:} ranked poset, $2$-colourability, planar graph, Eulerian graph, polyhedral graph, cellulated sphere, acyclic matching
	
	\noindent \textbf{MSC 2020:} 06A07, 05C15, 52B05
	\section{Introduction}
	Our goal is to obtain a higher dimensional analogue of the following theorem, which is of interest in combinatorics and geometry.
	\begin{theorem}\label{moti} A polygonally cellulated $2$-sphere in $\mathbb{R}^3$, such that each vertex has even degree, is $2$-face-colourable.
	\end{theorem}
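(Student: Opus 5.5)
The plan is to pass to the dual graph and prove that it is bipartite. Let $G^*$ be the graph whose vertices are the faces of the cellulated sphere, with two faces adjacent when they share an edge. A proper $2$-face-colouring is exactly a proper $2$-colouring of $G^*$, so it suffices to show that every closed walk in $G^*$ has even length.

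The key observation is local. Fix a vertex $v$ of even degree $d$. The faces around $v$ form a cycle of length $d$ in $G^*$, since consecutive faces in the cyclic order around $v$ share the edge between them. This cycle has even length by hypothesis. I will call these the vertex cycles. Concretely, take the cellulation $X$, a CW (indeed regular, polygonal) structure on $S^2$, and its dual cell structure $X^*$. The dual has one vertex per face of $X$, one dual edge crossing each edge of $X$, and one dual $2$-cell per vertex $v$ of $X$, whose boundary is exactly the vertex cycle at $v$. The $1$-skeleton of $X^*$ is $G^*$, and $X^*$ is again a cell structure on $S^2$.

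Next I define a parity homomorphism. Consider $\varphi\colon Z_1(G^*;\mathbb{Z}/2)\to\mathbb{Z}/2$ sending a $1$-cycle to its number of edges mod $2$. It vanishes on the boundary of every dual $2$-cell, because each vertex cycle has even length. Since $H_1(S^2;\mathbb{Z}/2)=0$, every $1$-cycle of $G^*$ is a mod-$2$ sum of boundaries of dual $2$-cells. Hence $\varphi\equiv 0$, so every cycle in $G^*$, in particular every odd closed walk reduced to a cycle, has even length. Therefore $G^*$ is bipartite, and the two colour classes give the $2$-face-colouring.

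Equivalently, and more elementarily, one can fix a base face $F_0$ and colour each face $F$ by the parity of the length of a dual path from $F_0$ to $F$. Well-definedness amounts to the same statement, namely that any two such paths differ by a cycle whose parity is zero. I would present the homological version and remark that, in the planar case, it is just the Jordan curve theorem: a cycle in the plane graph $G^*$ bounds a disc whose interior $G^*$-faces are vertex cycles.

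The main obstacle is justifying that the dual is a genuine cell decomposition of $S^2$ whose $2$-cells have boundary exactly the vertex cycles, so that $H_1=0$ can be applied to cellular chains. With polygonal faces in $\mathbb{R}^3$, the cellulation is regular and the link of each vertex is a circle. Barycentric subdivision then gives the dual structure, and I will argue through that. Alternatively, I can avoid the dual altogether by working in $X$: compute $H_1$ cellularly and use the fact that the edge-parity cochain has zero coboundary evaluation on vertex stars.
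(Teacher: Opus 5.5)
Your proof is correct, but it works on the dual side, whereas the paper works on the primal side. The paper reduces the statement to Theorem~\ref{planegraph} by realising the cellulation as a $3$-connected plane graph, and then derives that from Theorem~\ref{tmain}:
\begin{itemize}
\item a $2$-colouring is the same as a set $\xi$ of faces with $d_2(\xi)=\eta$, where $\eta$ is the sum of all edges (each edge lies in exactly two faces, so exactly one of them is in $\xi$);
\item even vertex degrees give $d_1(\eta)=0$;
\item Lemma~\ref{bd} shows that every mod-$2$ $1$-cycle bounds.
\end{itemize}
Under the identification $C^i(X^*)\cong C_{2-i}(X)$, your parity cochain $\varphi$ is exactly $\eta$. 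Its vanishing on vertex cycles is $d_1(\eta)=0$, and your colouring (a $0$-cochain with coboundary $\varphi$) is the paper's $\xi$. The ``alternative'' you mention at the end, of working in $X$, becomes exactly this argument once stated cleanly: the sum of all edges is a mod-$2$ cycle, hence bounds a set of faces, which is one colour class.

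The real difference is where the vanishing of first mod-$2$ homology comes from. You import it from topology: $H_1(S^2;\mathbb{Z}_2)=0$ for the dual block decomposition, or the Jordan curve theorem. You rightly flag building that decomposition as the step needing care, and barycentric subdivision does supply it because vertex links are circles. The paper instead proves the required exactness purely combinatorially (Lemmas~\ref{ft}--\ref{bd}). It uses an acyclic matching with no critical edges, built from a spanning tree of $G$ and the complementary spanning tree of $G^*$.

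Your route is the short classical one. It also extends to cellulated $d$-spheres, using dual $2$-cells around codimension-$2$ faces and $H_1=0$, but it needs a genuine manifold with a dual complex. The paper's primal, topology-free route is what lets it pass to sphere-like posets of any rank, where no dual complex exists, at the price of the $\M$-path machinery. For this particular statement, the paper's topological input is hidden in the planar realisation and in the tree/co-tree duality.
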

	Since, polygonally cellulated $2$-spheres can be realised as polyhedral graphs, which are $3$-connected planar graphs, the theorem above follows from a classic and elegant result from graph theory. 
	
	\begin{theorem}\label{planegraph}
	     A planar dual of an Eulerian  planar graph is bipartite, in other words any planar embedding of an Eulerian planar graph is $2$-face-colourable.  
	\end{theorem}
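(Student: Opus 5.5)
To prove Theorem~\ref{planegraph}, I will fix a plane embedding of a connected Eulerian planar graph $G$ (possibly with multiple edges). I will show that its dual $G^*$ has no odd cycle, since a graph is bipartite exactly when every closed walk has even length. Colouring the faces by the two sides of this bipartition then gives the $2$-face-colouring. If $G$ is disconnected, I will handle the components one at a time. Nesting is the only complication, and a face is then coloured by the parity of the sum of its colours with respect to each component. Equivalently, a point not on $G$ gets the parity of the number of crossings with $G$ along a generic path from a fixed base point, so it suffices to do the connected case in that form.

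The key step is a parity (crossing) argument. Take a cycle $C^*$ in $G^*$, with faces $F_0,F_1,\dots,F_m=F_0$, where consecutive faces share the edge $e_i$ of $G$. I realise it as a simple closed curve $\gamma$ in the plane. The curve passes through the interiors of the faces, crosses each dual edge $e_i$ transversally exactly once, and avoids every vertex of $G$. So $\gamma$ crosses $G$ exactly $m$ times, and I must show that $m$ is even. By the Jordan curve theorem, $\gamma$ splits the plane into an inside and an outside.

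Next I use the Eulerian hypothesis. Since every vertex has even degree, the edge set of $G$ decomposes into edge-disjoint cycles (Veblen's theorem), and each of these is a closed curve in the plane. A closed curve in general position with respect to $\gamma$ crosses $\gamma$ an even number of times, because each crossing switches it between the inside and the outside of $\gamma$ and it must end where it started. Summing over the cycles of the decomposition shows that the total number of crossings between $\gamma$ and $G$, which is $m$, is even. Hence every cycle of $G^*$ is even, and $G^*$ is bipartite.

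The main obstacle is the general position step: making sure $\gamma$ meets $G$ only transversally at interior points of edges, and exactly once per dual edge. When $\gamma$ passes through a single face twice, or when loops or bridges are present, I will perturb $\gamma$ inside faces and use the fact that the dual cycle traverses distinct dual edges. Alternatively, I can avoid these issues by proving the closed-walk parity directly. Define the colour of a face as the parity of the number of crossings of a generic path from a base face to it. Well-definedness reduces to the fact that a closed generic curve crosses $G$ an even number of times. That fact is the same cycle-decomposition computation above, with the roles of the two curves swapped.
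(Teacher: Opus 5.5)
Your argument is correct, modulo the general-position details you already flag. Those details are cleanest after passing to a piecewise-linear embedding; also, a cycle of $G^*$ visits each face only once, so $\gamma$ never has to re-enter a face. Your route is genuinely different from the paper's.

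The paper never draws a curve. It views the face poset of $G$ as a sphere-like poset of rank $2$. It obtains the required acyclic matching with no critical edges by combining a spanning tree $T$ of $G$ with the complementary spanning tree $T^*$ of $G^*$. It then applies Theorem~\ref{tmain}:
\begin{itemize}
\item evenness of degrees gives $d_1(\eta)=0$ for $\eta=\sum_{e\in E(G)}e$;
\item the discrete Morse argument of Lemma~\ref{bd} yields $\xi\in C_2$ with $d_2(\xi)=\eta$;
\item since every edge lies on exactly two faces, the faces occurring in $\xi$ form one colour class.
\end{itemize}

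Your proof is the topological counterpart of this computation. Veblen's decomposition writes $\eta$ as a sum of cycles $Z_j$. The Jordan curve theorem does the job of Lemma~\ref{bd}, since each $Z_j$ bounds its interior. Your crossing-parity colouring, with the base point in the unbounded region, is exactly $\xi$: the mod-$2$ sum of the interiors of the $Z_j$.

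What each route buys:
\begin{itemize}
\item Yours is shorter and direct for the plane. It also covers disconnected graphs and loops, which the paper's setup tacitly excludes: it uses a spanning tree, and in its face poset a loop covers only one vertex.
\item The paper's route pushes all the topology into the claim that the face poset is sphere-like. The tree--cotree duality it quotes, and the fact that each edge of an Eulerian plane graph lies on two distinct faces, are themselves consequences of Jordan-curve-type arguments. After that, everything is linear algebra over $\mathbb{Z}_2$, and the argument runs unchanged in every rank $k$. Your argument would instead need Jordan--Brouwer separation and general position in higher dimensions.
\end{itemize}
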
 
	A natural generalisation of this polygonally cellulated $2$-sphere is \emph{combinatorial $d$-sphere} (i.e., a $d$-dimensional \emph{psudomanifold} which \emph{collapses}~\cite[Chapter 1 \& 2]{cohen} to a point after the deletion of a specific $d$-dimensional face). However, instead of combinatorial spheres, we work with a more generalised poset theoretic setup. We introduce the notion of \emph{combinatorially sphere-like ranked poset} or, simply \emph{sphere-like poset}.
	In order to formally define sphere-like posets and to state our main result, first we define the following terminologies.

	The \emph{directed Hasse diagram} of a ranked poset $S$ is the directed acyclic graph on $S$, where an edge is directed from $x$ to $y$, if and only if $x$ covers $y$. An \emph{acyclic matching} on a ranked poset $S$ is a matching in the directed Hasse diagram of $S$, such that the graph, obtained after altering the direction of the edges in the matching, remains acyclic. For an acyclic matching $\M$ on $S$, an element $x\in S$ is said to be $\M$-critical if $x$ is not matched by $\M$. It is noteworthy that the notion of acyclic matching comes from discrete Morse theory\cite{chari}, and they are associated with the notion of \emph{collapse}~\cite{forman2002,forman1998,chari}.   
	 \begin{definition}[Sphere-like poset]
	 	A ranked poset $S$ of rank $k$ (where $k\geq 2$) is said to be \emph{sphere-like} if the following holds.
	 	\begin{enumerate}[(i)]
	 		\item Each element of rank $(k-1)$ is covered by exactly two elements.
	 		\item For each pair $(y,z)$ with $\rk(y)=k$ and $\rk(z)=k-2$, there are an even number of elements $x$ of rank $(k-1)$ such that $z \lessdot  x\lessdot y$.
	 		\item There exists an acyclic matching $\M$ on $S$, such that no element of rank $(k-1)$ is $\M$-critical.
	 	\end{enumerate}
	 \end{definition}
	 
	In this note, we provide a self-contained and purely combinatorial proof of the following result about the $2$-colourability of ``faces" (elements of the maximum rank) of sphere-like posets.
		\begin{theorem}\label{tmain}
			Let $S$ be a sphere-like poset of rank $k$. If for every $z\in S$ with $\rk(z)=k-2$, the number of elements which covers $z$ is even, then the maximum ranked elements of $S$ admit a proper $2$-colouring (i.e., any two maximum ranked elements covering a `common' element have different colours).
		\end{theorem}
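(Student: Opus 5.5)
Write $S_i$ for the set of elements of rank $i$. We work over $\mathbb{F}_2$. Form the "dual graph" $G$ with vertex set $S_k$, and one edge joining $y,y'$ for each $x\in S_{k-1}$, where $y,y'$ are the two elements covering $x$ (condition (i)). A proper $2$-colouring of $S_k$ is exactly a bipartition of $G$. So it suffices to show that every closed walk in $G$ has even length. Equivalently, we show that every edge set $C\subseteq S_{k-1}$ forming a cycle space element of $G$ has even cardinality. Here $C$ is a cycle space element when every $y\in S_k$ covers an even number of elements of $C$, i.e.\ $\partial_k^{T}$-type condition $\delta C=0$, where $\delta:\mathbb{F}_2^{S_{k-1}}\to\mathbb{F}_2^{S_k}$ is the incidence map.

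The plan has three steps.

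\textbf{Step 1: $C$ is a coboundary.} We show that any $C$ with $\delta C=0$ equals $\delta' Z$ for some $Z\subseteq S_{k-2}$, where $\delta'$ sends $z$ to the set of $x\in S_{k-1}$ covering $z$. Condition (ii) says exactly that $\delta\delta'=0$, so we have a cochain complex in the top three degrees. The acyclic matching $\mathcal M$ with no critical element in rank $k-1$ is what makes the top cohomology in degree $k-1$ vanish; this is the discrete Morse-theoretic ingredient.

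To avoid invoking Morse theory, I would argue directly by induction along the acyclic order. Elements of $S_{k-1}$ are matched either up to some $y\in S_k$ or down to some $z\in S_{k-2}$. Acyclicity gives a linear order on the matched pairs (a gradient-path order) in which we can eliminate $C$ pair by pair. If $x\in C$ is matched down with $z$, replace $C$ by $C+\delta' z$; this removes $x$ and only changes elements that come earlier in the order. If $x$ is matched up with $y$, use $\delta C=0$ at $y$ to see that $x$ can be pushed to other faces of $y$, which are again earlier. The process terminates with $C=\emptyset$ once the elimination order is set up so that each step strictly decreases $C$ in a well-founded sense. Getting this ordering right, and handling the up-matched case, is the main obstacle. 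I would set it up by defining a potential via the longest gradient path from each element, which is finite by acyclicity.

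\textbf{Step 2: parity.} Once $C=\delta' Z$, we have
\[
|C|\equiv \sum_{z\in Z}|\delta' z| \pmod 2 .
\]
Each $|\delta' z|$ is the number of elements covering $z$, which is even by hypothesis. Hence $|C|$ is even.

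\textbf{Step 3: conclusion.} Take any closed walk in $G$. Its edge multiset reduces mod $2$ to a cycle space element $C$, and the walk length is congruent to $|C|$ mod $2$. By Step 2 the length is even, so $G$ is bipartite. We then $2$-colour each connected component of $G$ separately.
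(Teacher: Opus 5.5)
Your overall reduction is sound, and it is the transpose of the paper's argument. The paper proves that every $(k-1)$-cycle is a boundary (Lemma~\ref{bd}), applies this to $\eta=\sum_{x\in S_{k-1}}x$, and reads off a colour class as a preimage. You instead want every cocycle ($\delta C=0$) to be a coboundary, and then use the parity count of Step~2. Over $\mathbb{Z}_2$ these are equivalent, and Steps~2 and~3 are correct.

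The gap is in Step~1, the only place where condition (iii) enters, and specifically in its up-matched case.

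Your down-matched move is fine. Adding $\delta' z$ for $(z,x)\in\M$ removes $x$ and toggles only the other $x'\in\nabla(z)$. Each such $x'$ is either up-matched or belongs to a down-pair $(z',x')$ with $z\lessdot x'$. The relation $(z,x)\to(z',x')$ is acyclic: a directed cycle $(z_0,x_0)\to\cdots\to(z_r,x_r)=(z_0,x_0)$ would give the closed $\M$-path $x_r\gtrdot z_{r-1}\rightarrowtail x_{r-1}\gtrdot\cdots\gtrdot z_0\rightarrowtail x_0$. So processing down-pairs in a topological order clears all of $\dn_{k-1}^{(\M)}$.

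Your up-matched move does not work. For $(x,y)\in\M$, ``pushing $x$ to the other faces of $y$'' means adding $d_k(y)$ to $C$. That is the elimination move for \emph{cycles}, not cocycles. In general $d_k(y)\notin\operatorname{im}\delta'$; for a triangle $y$ of the octahedron it is not even a cocycle. So after such a move you can no longer conclude $C\in\operatorname{im}\delta'$. The condition $\delta C=0$ at $y$ only tells you that $C$ contains another face of $y$; it gives no way to remove $x$.

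The missing idea is that up-matched elements need not be eliminated at all. After clearing the down-matched ones, you have $C'=C+\delta' Z$, which is still a cocycle since $\delta\delta'=0$, and is supported on $\up_{k-1}^{(\M)}$. Any such cocycle is zero:
\begin{enumerate}[(1)]
\item If $x_0\in C'$ with $(x_0,y_0)\in\M$, then $y_0$ covers an even number of elements of $C'$, hence some $x_1\in C'\setminus\{x_0\}$.
\item $x_1$ is matched to some $y_1$, and $y_1\neq y_0$ because $y_0$ is already matched to $x_0$.
\item Iterating gives an $\M$-path $y_0\gtrdot x_1\rightarrowtail y_1\gtrdot x_2\rightarrowtail\cdots$, which must close up in the finite poset $S$, contradicting acyclicity.
\end{enumerate}
This is exactly the transpose of the paper's Lemma~\ref{dn}. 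With it, Step~1 gives $C=\delta' Z$ and your proof is complete.

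Compared with the paper, your route replaces the path-counting construction of $\xi$ in Lemma~\ref{bd} by a simple elimination. The paper's route, in turn, produces a colour class directly, without passing through closed walks in the dual graph.
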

    Next, we justify that any planar embedding of a planar graph may be realised as a sphere-like poset of rank $2$, and thus Theorem~\ref{planegraph} follows from Theorem~\ref{tmain}.
	\section{Preliminaries}
	
	Let $S$ a partially ordered set (or, poset) with the partial order `$\leq$' on $S$. For $x,y \in S$, we say \emph{$y$ covers $x$} (often written as $x\lessdot y$) if (i) $x<y$, and (ii) there is no element $z$ such that $x<z<y$. A \emph{ranked poset} is a poset $S$ equipped with a rank function $\rk:S \rightarrow \mathbb{N}\cup \{0\}$, such that `$\rk$' satisfies the following two properties : (i) the rank function is compatible with the partial order, that is, if $x < y$ then $\rk(x) < \rk(y)$, and
	(ii) the rank is consistent with the covering relation, that is, if $x \lessdot y$ then $\rk(y) = \rk(x)+1$.  The \emph{rank of a poset} $S$ is defined as, $\rk(S):= \max\{\rk(x): x \in S\}$. For the sake of convenience, whenever $x$ does not cover any elements, we assume that $\rk(x)=0$. For any $x\in S$, let $\Delta(x):= \{z\in S: z \lessdot x\}$, and $\nabla(x):= \{y\in S: x \lessdot y\}$.
	For any $x_1,x_2\in S$, such that $\rk(x_1)=\rk(x_2)$, we say $x_1$ and $x_2$ are \emph{adjacent} if $\Delta(x_1)\cap \Delta(x_2)\neq \emptyset$.
	
    Let, for a graph $G$, $V(G)$ and $E(G)$ be the \emph{vertex set} and \emph{edge set} of $G$, respectively. A graph $G$ can be realised as a poset, $\mathcal{F}(G)=(V(G)\sqcup E(G),\leq)$, where the partial order `$\leq$' is defined as, for any $v\in V(G)$ and $e\in E(G)$, $v \leq e$ if and only if $v$ is an endpoint $e$. A graph $G$ is \emph{planar} if it admits a planar embedding. For a planar embedding of a planar graph $G$, along with vertices and edges, there is a natural notion of \emph{faces}. The \emph{face poset} of a planar graph $G$ (with respect to a chosen embedding) is a poset on the set $V(G)\sqcup E(G)\sqcup F(G)$, where $F(G)$ is the set of faces of $G$, and the partial order is given as follows. For any $v\in V(G)$ and $e\in E(G)$, $v \leq e$ if and only if $v$ is an endpoint of $e$ and for any $e\in E(G)$ and $f\in F(G)$, $e \leq f$ if and only if $e$ lies in the boundary of the face $f$. We refer to \cite{diestel} for the notions related to graph theory.
    
	Let $S$ be a ranked poset. We note that, a \emph{matching} in the directed Hasse diagram of $S$   is a collection $\M$ 
	of ordered pairs of elements in $S$, such that (i) if $(x,y)\in \M$, then $y$ covers $x$, and (ii) each element of $S$ is in at most one pair of $\M$.
	An \emph{$\M$-path} is an alternating sequence of elements,
	$P:y_0,x_1, y_1, \ldots, x_r,
	y_r,$
	such that, for each $i \in \{1,\ldots,r\}$, $(x_i,y_i) \in
	\M$, and $y_{i-1} (\neq y_i)$ covers $x_i$. For any $(x,y)\in \M$, we sometimes represent the pair $(x,y)$ as $x \rightarrowtail y$. With this notation a representation of $P$ goes as follows.
	\[P:y_0\gtrdot x_1 \rightarrowtail y_1 \gtrdot \cdots \gtrdot  x_r \rightarrowtail
	y_r.\]
	The elements $y_0$ and $y_r$ are called the \emph{initial element} (denoted by $\operatorname{init}(P)$) and the \emph{terminal element} (denoted by $\operatorname{term}(P)$) of the $\M$-path $P$, respectively. Such an $\M$-path $P$ is said to be \emph{non-trivial closed} if
	$r > 0$ and $x_{r} = x_0$. We observe that, $\M$ is an \emph{acyclic matching} on $S$ if there is no non-trivial closed $\M$-paths.
	Let $S$ be a ranked poset and $\M$ be an acyclic matching on $S$, then  an element $x\in S$ is said to be \emph{$\M$-critical} if  $x$ does not appear in any pair of $\M$. Let
	\begin{align*}
		\crit_q^{(\M)} &:=\{x\in S: x \text{ is $\M$-critical, } \rk(x)=q\},\\
		\up_q^{(\M)} &:=\{x\in S: (x,y)\in \M, \rk(x)=q)\}\text{, and}\\
		\dn_q^{(\M)} &:=\{x\in S: (z,x)\in \M, \rk(x)=q)\}.
	\end{align*} 
	
	 Note that, in a rooted tree $T$, a matching that pairs each edge of $T$  with its endpoint farthest from the root, is acyclic. Hence, we have the following proposition.
	\begin{proposition}\label{tree}
	    Let $T$ be a tree. Then there is an acyclic matching on $\mathcal{F}(T)$, such that all the edges of $T$ (i.e., elements of rank $1$ in $\mathcal{F}(T)$) are matched. 
	\end{proposition}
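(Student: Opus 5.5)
The plan is to root $T$ at an arbitrary vertex $r$ and use the matching suggested just before the statement: pair each edge with its endpoint farther from $r$. Then I will check three things in turn: that this really is a matching in the directed Hasse diagram of $\mathcal{F}(T)$, that it matches every edge, and that it admits no non-trivial closed $\M$-path.

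\emph{Defining the matching.} For each edge $e\in E(T)$, the two endpoints of $e$ lie at depths (distances from $r$) differing by exactly one. Let $c(e)$ be the deeper endpoint, the ``child''. Put $\M=\{(c(e),e): e\in E(T)\}$. Each pair is a covering relation $c(e)\lessdot e$ in $\mathcal{F}(T)$. Every non-root vertex $v$ has a unique parent edge, namely the last edge of the unique $r$--$v$ path in $T$. Hence $e\mapsto c(e)$ is a bijection from $E(T)$ onto $V(T)\setminus\{r\}$, and so each element of $\mathcal{F}(T)$ lies in at most one pair. Thus $\M$ is a matching in which every edge is matched, and the only critical element is $r$.

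\emph{Acyclicity.} Since $\mathcal{F}(T)$ has rank $1$, an $\M$-path has the form $y_0\gtrdot x_1\rightarrowtail y_1\gtrdot x_2\rightarrowtail y_2\gtrdot\cdots$, where the $y_i$ are edges and the $x_i$ are vertices with $x_i=c(y_i)$. Consider a consecutive step $x_i\rightarrowtail y_i\gtrdot x_{i+1}\rightarrowtail y_{i+1}$. Here $x_{i+1}$ is an endpoint of $y_i$. It is not $x_i$: otherwise $x_i$ would be matched to both $y_i$ and $y_{i+1}$, yet $y_{i+1}\neq y_i$ by the definition of an $\M$-path. So $x_{i+1}$ is the other endpoint of $y_i$, which is the parent of $x_i$. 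Consequently the depth of $x_{i+1}$ is one less than the depth of $x_i$, so the depths of $x_1,x_2,\ldots$ strictly decrease along any $\M$-path. The $x_i$ are therefore pairwise distinct, and no element can repeat. In particular, there is no non-trivial closed $\M$-path, so $\M$ is acyclic.

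The only step needing care is this acyclicity argument. Specifically, one must use the condition $y_{i-1}\neq y_i$ in the definition of an $\M$-path to rule out re-entering the same vertex. Once that is done, the strictly decreasing depth gives the conclusion immediately.
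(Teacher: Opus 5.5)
Your proposal is correct and follows the paper's approach exactly. The paper only asserts, without proof, that pairing each edge with its endpoint farther from the root gives an acyclic matching. Your strictly-decreasing-depth argument supplies the missing verification. For a closed path with $y_r=y_0$, note that the wrap-around step $x_r\rightarrowtail y_r=y_0\gtrdot x_1$ (where $y_0\neq y_1$) falls under the same argument and gives the contradiction.
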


	 The notion of $2$-colourability of the maximum ranked elements of a sphere-like poset
	 is as follows.
	 \begin{definition}
	 	Let $S$ be a sphere-like poset of rank $k$, and $S_k=\{x\in S: \rk(x)=k\}$. Then, we say the maximum ranked elements of $S$ are \emph{$2$-colourable} (or simply, $S$ is $2$-colourable) if there exists a function $\psi: S_k\rightarrow \mathbb{Z}_2$, such that for any $x_1,x_2 \in S_k$, if $x_1$ and $x_2$ are adjacent, then $\psi(x_1)\neq \psi(x_2)$.
	 \end{definition}

	  In sphere-like poset $S$ of rank $k$, for $0\leq q\leq k$, let $S_q$ denotes the set of all elements of rank $q$ in $S$. Let, for any $q\in \{0,\ldots,k\}$, $C_q$ be the formal $\mathbb{Z}_2$-vector space with $S_q$ as a basis. We define linear maps $d_q: C_q \rightarrow C_{q-1}$ (by defining on the generating set $S_q$), such that, for $x\in S_q$,
	  \[d_q(x)= \sum_{z\in \Delta(x)}z.\]
	  Equivalently,
	 \[d_q(x)= \sum_{z\in S_{q-1}} \chi_{(x,z)}\cdot z, \text{ where}\]
	 \[ \chi_{(x,z)}=\begin{cases}
	     1, \text{ if } z\lessdot x,\\
	     0, \text{ otherwise.}
	 \end{cases}\]
	
	\begin{lemma}\label{poincare}
		Let  $S$ be sphere-like poset of rank $k$, then $d_{k-1}\circ d_{k}=0$.
	\end{lemma}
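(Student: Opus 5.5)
It suffices to check the identity on the basis $S_k$ of $C_k$, since $d_{k-1}\circ d_k$ is linear. So fix $y\in S_k$ and expand the composition directly from the definition of the maps $d_q$:
\[
d_{k-1}(d_k(y))=\sum_{x\in\Delta(y)} d_{k-1}(x)=\sum_{x\in \Delta(y)}\sum_{z\in \Delta(x)} z=\sum_{z\in S_{k-2}} N(y,z)\cdot z,
\]
where $N(y,z)$ denotes the number of elements $x$ of rank $k-1$ with $z\lessdot x\lessdot y$. The last equality comes from regrouping the double sum by the element $z$: each $z\in S_{k-2}$ appears exactly once for every such intermediate $x$.

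The key step is to apply condition (ii) of the definition of a sphere-like poset. For every pair $(y,z)$ with $\rk(y)=k$ and $\rk(z)=k-2$, it says $N(y,z)$ is even. Since we work over $\mathbb{Z}_2$, every coefficient $N(y,z)\cdot z$ therefore vanishes, so $d_{k-1}(d_k(y))=0$. Extending linearly over the basis $S_k$ gives $d_{k-1}\circ d_k=0$.

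There is no real obstacle here. The only care needed is the bookkeeping that converts the double sum into a sum indexed by $z$ with multiplicity $N(y,z)$. This is justified because, in a ranked poset, every $z\in\Delta(x)$ with $x\in\Delta(y)$ has rank exactly $k-2$, so only pairs of the form in condition (ii) arise. Conditions (i) and (iii) are not needed for this lemma.
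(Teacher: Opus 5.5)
Your proof is correct and takes the same approach as the paper, which simply cites condition (ii) of the sphere-like definition. You have written out the regrouping of the double sum by $z$ that the paper leaves implicit, and you correctly observe that conditions (i) and (iii) are not needed.
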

	\begin{proof}
		The proof of this lemma follows from the property that, in a sphere-like poset,  for each pair $(y,z)$ with $\rk(y)=k$ and $\rk(z)=k-2$, then there are an even number of elements $x$ of rank $(k-1)$, such that $z\lessdot x\lessdot y$.
	\end{proof}

	\section{Proof of the main result}
	 
	 	Let $S$ be a any ranked poset and $\M$ be any acyclic matching on $S$.  For any $x,y \in S$, such that $\rk(x)=\rk(y)$, let $\Gamma_{(x,y)}^{(\M)}$ be the collection of all $\M$-paths from $x$ to $y$. Let 
	 	\[\ell^{(\M)}_{(x,y)}:= |\Gamma_{(x,y)}^{(\M)}|  \pmod{2}, \text{ in other words}\]
	 \[\ell^{(\M)}_{(x,y)}= \begin{cases}
	 	1, \text{ if } |\Gamma_{(x,y)}^{(\M)}| \text{ is odd,}\\
	 	0, \text{ if } |\Gamma_{(x,y)}^{(\M)}| \text{ is even}.
	 \end{cases}\]
	 Then we have the following lemmas.
	 \begin{lemma}\label{ft}
	 	Let $S$ be a ranked poset and $\M$ be any acyclic matching on it. Then, for any $x\in \crit_q^{(\M)}$ and $z\in \up_{q-1}^{(\M)}$, we have  $\sum_{y\in \nabla(z)} \ell^{(\M)}_{(x,y)}=0$.
	 \end{lemma}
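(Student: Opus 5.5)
The plan is a direct counting argument: I will exhibit a bijection showing that the $\M$-paths from $x$ to the partner of $z$ are in one-to-one correspondence with the $\M$-paths from $x$ to all the other elements of $\nabla(z)$. The identity $\sum_{y\in \nabla(z)} \ell^{(\M)}_{(x,y)}=0$ then follows at once modulo $2$.

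Since $z\in \up_{q-1}^{(\M)}$, there is a unique $w$ with $(z,w)\in \M$. Then $w\in \nabla(z)$ and $\rk(w)=q$. I split the sum as
\[\sum_{y\in \nabla(z)} \ell^{(\M)}_{(x,y)} = \ell^{(\M)}_{(x,w)} + \sum_{y\in \nabla(z)\setminus\{w\}} \ell^{(\M)}_{(x,y)},\]
so it suffices to prove
\[|\Gamma^{(\M)}_{(x,w)}| = \sum_{y\in\nabla(z)\setminus\{w\}} |\Gamma^{(\M)}_{(x,y)}|.\]

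I will define a map $\Phi$ from the disjoint union $\bigsqcup_{y\in \nabla(z)\setminus\{w\}}\Gamma^{(\M)}_{(x,y)}$ to $\Gamma^{(\M)}_{(x,w)}$. Given $P: x=y_0\gtrdot x_1\rightarrowtail\cdots\rightarrowtail y_r=y$ with $y\neq w$ and $y\gtrdot z$, set
\[\Phi(P): x\gtrdot\cdots\rightarrowtail y\gtrdot z\rightarrowtail w.\]
This is an $\M$-path because $(z,w)\in\M$, $y$ covers $z$, and $y\neq w$. The trivial path $r=0$ is allowed here; it occurs exactly when $x\in\nabla(z)$.

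For the inverse, take any $Q\in\Gamma^{(\M)}_{(x,w)}$. Since $x$ is $\M$-critical and $w$ is matched, $x\neq w$, so $Q$ is non-trivial. Its last matched pair $(x_r,w)$ must equal $(z,w)$, since $w$ lies in only one pair of $\M$. Deleting the final two entries $z,w$ therefore leaves an $\M$-path from $x$ to its penultimate element $y_{r-1}$. The definition of $\M$-path gives $y_{r-1}\gtrdot z$ and $y_{r-1}\neq w$, so $y_{r-1}\in\nabla(z)\setminus\{w\}$.

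The two operations are clearly mutually inverse, so $\Phi$ is a bijection, which gives the displayed count. Reducing modulo $2$ yields the lemma.

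The only delicate point is the step showing that every path to $w$ ends with $z\rightarrowtail w$ and has a legitimate predecessor. This uses three facts: criticality of $x$, which excludes the trivial path from $x$ to $w$; the matching property, which forces the last pair to be $(z,w)$; and the requirement $y_{i-1}\neq y_i$ in the definition of an $\M$-path. Acyclicity of $\M$ is not really needed beyond ensuring that the path sets are finite.
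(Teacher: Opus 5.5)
Your proof is correct and is essentially the paper's argument. The paper uses the same bijection in the opposite direction: it maps $\Gamma^{(\M)}_{(x,w)}$ onto $\bigcup_{y\in\nabla(z)\setminus\{w\}}\Gamma^{(\M)}_{(x,y)}$ by deleting the tail $z\rightarrowtail w$, then reduces modulo $2$ exactly as you do, whereas you also spell out why the map is a bijection (criticality of $x$ forcing non-triviality, uniqueness of the matched pair containing $w$, and $y_{r-1}\neq w$), which the paper only asserts.
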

	 \begin{proof}
	 	Let $(z,y^\prime)\in \M$. We define a map $\phi:\Gamma^{(\M)}_{(x,y^\prime)} \rightarrow \cup_{y\in \nabla(z)\setminus \{y^\prime\}} \Gamma^{(\M)}_{(x,y)}$ as follows. For any $P\in  \Gamma^{(\M)}_{(x,y^\prime)} $, such that 
	 	\[P=x\gtrdot \cdots \rightarrowtail y \gtrdot z \rightarrowtail y^\prime, \]
	 	$\phi$ maps $P$ to a ``path" obtained after discarding the tail `$ z \rightarrowtail y^\prime$', that is
	 	\[\phi(P)= x \gtrdot \cdots \rightarrowtail y.\]
	 We observe that $\phi$ is a bijection, and hence we get
	 	\begin{align*}
	 		& |\Gamma^{(\M)}_{(x,y^\prime)}|=| \cup_{y\in \nabla(z)\setminus \{y^\prime\}}\Gamma^{(\M)}_{(x,y)}|\\
	 		\implies & |\cup_{y\in \nabla(z)} \Gamma_{(x,y)} ^{(\M)}|   \equiv 0 \pmod{2}\\
	 		\implies &  \sum_{y\in \nabla(z)}| \Gamma_{(x,y)}^{(\M)} | \equiv 0 \pmod{2}\\
	 		\implies & \sum_{y\in \nabla(z)} \ell^{(\M)}_{(x,y)}=0.
	 	\end{align*}
	 \end{proof}
    
     \begin{lemma}\label{dn}
     	Let $S$ be a sphere-like poset of rank $k$, and $\M$ be any acyclic matching on
     	 $S$. Let $ \dn_{k-1}^{(\M)}=\{x_1,\ldots,x_n\}$. For any $\sigma=\sum_{i=1}^n t_ix_i ~(\in C_{k-1})$, if $d_{k-1}(\sigma)=0$, then $\sigma=0$.
     \end{lemma}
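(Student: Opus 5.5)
The idea is to use the acyclicity of $\M$ to make the matrix of $d_{k-1}$ restricted to $\dn_{k-1}^{(\M)}$ "triangular" with respect to the matched partners; none of the sphere-like hypotheses should be needed. For each $x_i\in \dn_{k-1}^{(\M)}$ there is a unique $z_i$ with $(z_i,x_i)\in\M$, and $\rk(z_i)=k-2$. Since $\M$ is a matching, $z_i\neq z_j$ for $i\neq j$.

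Suppose, for contradiction, that $\sigma=\sum_i t_ix_i\neq 0$ with $d_{k-1}(\sigma)=0$. Let $A=\{x_i : t_i=1\}$, which is nonempty. On $A$ I define a directed graph $H$ with an arc $x_i\to x_j$ ($i\neq j$) whenever $z_j\lessdot x_i$. Each such arc corresponds to an $\M$-path segment $x_i\gtrdot z_j\rightarrowtail x_j$. Concatenating arcs along a directed cycle of $H$ would therefore give a non-trivial closed $\M$-path, contradicting the acyclicity of $\M$. Hence $H$ is a finite acyclic digraph, and it has a source $x_j\in A$: no $x_i\in A$ with $i\neq j$ covers $z_j$.

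Now I compute the coefficient of $z_j$ in $d_{k-1}(\sigma)=\sum_{x_i\in A}\sum_{z\in\Delta(x_i)}z$. The term $x_j$ contributes $1$, because $z_j\lessdot x_j$. No other $x_i\in A$ contributes, by the choice of $x_j$ as a source. So the coefficient of $z_j$ is $1\neq 0$ in $C_{k-2}$, contradicting $d_{k-1}(\sigma)=0$. Therefore $\sigma=0$.

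The main point needing care is the cycle-to-closed-path step. A directed cycle $x_{i_0}\to x_{i_1}\to\cdots\to x_{i_r}=x_{i_0}$ in $H$ yields
\[x_{i_0}\gtrdot z_{i_1}\rightarrowtail x_{i_1}\gtrdot z_{i_2}\rightarrowtail\cdots\rightarrowtail x_{i_r}=x_{i_0}.\]
This satisfies the definition of an $\M$-path: consecutive $y$'s differ because arcs join distinct vertices, and the path returns to its start with $r>0$. It is exactly what acyclicity forbids.
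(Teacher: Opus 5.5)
Your proof is correct and is essentially the paper's argument. The paper takes a longest $\M$-path $x_{i_0}\gtrdot z_{i_1}\rightarrowtail x_{i_1}\gtrdot\cdots$ within the support of $\sigma$ and uses the evenness of $|\nabla(z_{i_0})\cap\{x_1,\ldots,x_m\}|$ to either extend it or close it into a cycle; this is exactly the standard proof that your acyclic digraph $H$ has a source, so picking a source directly and reading off the coefficient of $z_j$ is a cleaner phrasing of the same idea (and, as you note, only the acyclicity of $\M$ is used).
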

     \begin{proof}
        Without loss of generality, let us assume $t_i=1$ for all $i\in \{1,\ldots,m\}$ and $t_i=0$ for all $i\in \{m+1,\ldots,n\}$, where $m\in  \{1,\ldots,n\}$. Then, we have
        \[\sigma=\sum^m_{i=1} x_i.\]  
        Now, 
        \begin{align} \label{eq1}
           \nonumber &d_{k-1}(\sigma)=0\\
        	\nonumber \implies & d_{k-1}(\sum^m_{i=1} x_i)=0\\
            \nonumber \implies & 	\sum^m_{i=1} d_{k-1}(x_i)=0\\
            \nonumber \implies & \sum_{i=1}^{m} \sum_{z\in S_{k-2}} \chi_{(x_i,z)} \cdot  z=0\\
            \nonumber \implies &  \sum_{z\in S_{k-2}} \left( \sum_{i=1}^{m}  \chi_{(x_i,z)} \right)  z=0\\
           \nonumber  \implies & \sum_{i=1}^{m}  \chi_{(x_i,z)} = 0\text{, for all } z \in S_{k-2}\\
            \implies & | \nabla(z) \cap \{x_1,\ldots,x_m\}| \text{ is even, for all }z \in S_{k-2}.
        \end{align} 
       Let, for all $i\in \{1,\ldots,m\}$, $(z_i,x_i)\in \M$. Now, let us assume that the following is a longest $\M$-path involving the elements $x_1,x_2\ldots,x_m$ and $z_1,z_2\ldots,z_m$.
       \[P:x_{i_0} \gtrdot z_{i_1} \rightarrowtail x_{i_1}\gtrdot \cdots \gtrdot z_{i_p} \rightarrowtail x_{i_p},\]
       where $i_j \in \{1,\ldots m\}$. Now, since $x_{i_0} \in \nabla(z_{i_0})$, from Equation~\eqref{eq1}, there exists some $x_r\neq x_{i_0}$, $r\in \{1,\ldots,m\}$, such that $x_r\in \nabla(z_{i_0})$. Now if $x_r\neq x_{i_j}$, for all $j\in\{0,\ldots,p\}$, then 
       \[x_r \gtrdot z_{i_0} \rightarrowtail x_{i_0} \gtrdot z_{i_1} \rightarrowtail x_{i_1}\gtrdot \cdots \gtrdot z_{i_p} \rightarrowtail x_{i_p}\]
       is also a $\M$-path, which contradicts the fact that $P$ is a longest $\M$-path. And if  $x_r= x_{i_j}$, for some $j\in \{1,\ldots,p\}$, 
       \[(x_{i_j}=)~x_r \gtrdot z_{i_0} \rightarrowtail x_{i_0} \gtrdot z_{i_1} \rightarrowtail x_{i_1}\gtrdot \cdots \gtrdot z_{i_j} \rightarrowtail x_{i_j},\]
       is a closed $\M$-path, which is again a contradiction. Therefore, $t_i=0$, for all $i\in \{1, \ldots, n\}$, and hence $\sigma=0$.

     \end{proof}
     \begin{lemma} \label{bd}
     	Let $S$ be a sphere-like ranked poset of rank $k$, then for any $\eta\in C_{k-1}$, such that $d_{k-1}(\eta)=0$, there exists $\xi\in C_k$ such that $d_k(\xi)=\eta$.
     \end{lemma}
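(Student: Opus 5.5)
The idea is to use the acyclic matching $\M$ from condition (iii) of the definition of a sphere-like poset to push the cycle $\eta$ off the elements of $\up_{k-1}^{(\M)}$, adding boundaries of rank $k$ elements as we go. What remains is then a cycle supported on $\dn_{k-1}^{(\M)}$, which Lemma~\ref{dn} forces to be zero.

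\textbf{Splitting $S_{k-1}$.} Since no element of rank $(k-1)$ is $\M$-critical, we have $S_{k-1}=\up_{k-1}^{(\M)}\sqcup \dn_{k-1}^{(\M)}$. Each $x\in \up_{k-1}^{(\M)}$ has a partner $y_x\in S_k$ with $(x,y_x)\in\M$.

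\textbf{An ordering on $\up_{k-1}^{(\M)}$.} Define a directed graph $G$ on $\up_{k-1}^{(\M)}$ with an edge $x\to x'$ whenever $x'\neq x$ and $x'\lessdot y_x$. A directed cycle in $G$ would give a non-trivial closed $\M$-path
\[y_{x_1}\gtrdot x_2\rightarrowtail y_{x_2}\gtrdot\cdots\gtrdot x_1\rightarrowtail y_{x_1}.\]
This is impossible, so $G$ is acyclic. Fix a topological ordering $u_1,\ldots,u_N$ of $\up_{k-1}^{(\M)}$, so that every edge of $G$ goes from $u_i$ to $u_j$ with $i<j$.

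\textbf{The reduction.} Set $\eta_0=\eta$. For $i=1,\ldots,N$, let $\eta_i=\eta_{i-1}$ if the coefficient of $u_i$ in $\eta_{i-1}$ is $0$, and $\eta_i=\eta_{i-1}+d_k(y_{u_i})$ otherwise. We work over $\mathbb{Z}_2$, and $d_k(y_{u_i})$ contains $u_i$ with coefficient $1$, so $\eta_i$ has coefficient $0$ at $u_i$. Every other element of $\Delta(y_{u_i})$ is either in $\dn_{k-1}^{(\M)}$ or is some $u_j$ with $u_i\to u_j$ in $G$, and then $j>i$. Hence step $i$ never changes the coefficients of $u_1,\ldots,u_{i-1}$. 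By induction, $\eta_N$ has zero coefficient at every element of $\up_{k-1}^{(\M)}$, i.e.\ $\eta_N$ is supported on $\dn_{k-1}^{(\M)}$.

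By construction, $\eta_N=\eta+d_k(\xi)$, where $\xi$ is the sum of the $y_{u_i}$ used in the reduction. Lemma~\ref{poincare} gives $d_{k-1}(\eta_N)=d_{k-1}(\eta)+d_{k-1}d_k(\xi)=0$. Applying Lemma~\ref{dn} to $\eta_N$ yields $\eta_N=0$, so $\eta=d_k(\xi)$ in characteristic $2$, as required.

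The main obstacle is guaranteeing that the cancellation terminates: removing one $\up$-element can reintroduce others, possibly ones already cleared. The topological order coming from the acyclicity of $\M$ handles this, and the key check is that the edges of $G$ correspond exactly to steps of $\M$-paths, so a directed cycle in $G$ really is a non-trivial closed $\M$-path.
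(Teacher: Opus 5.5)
Your proof is correct. Its outer skeleton matches the paper's: split $S_{k-1}=\up_{k-1}^{(\M)}\sqcup\dn_{k-1}^{(\M)}$, add to $\eta$ the boundary of a combination of $\M$-partners of $\up$-elements so that the $\up$-part vanishes, then kill the remaining $\dn$-supported cycle with Lemma~\ref{poincare} and Lemma~\ref{dn}.

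The difference is how you clear the $\up$-part. The paper does it in closed form. For each $x_j\in\up_{k-1}^{(\M)}$ it passes to $\M_j=\M\setminus\{(x_j,y_j)\}$ and forms the vector $\overrightarrow{y_j}^{(\M_j)}$ of parities of $\M_j$-path counts starting at $y_j$. It then uses Lemma~\ref{ft}, together with acyclicity at $x_j$ itself, to show that $d_k(\overrightarrow{y_j}^{(\M_j)})$ has $\up$-part exactly $x_j$. You instead observe that acyclicity makes the incidence matrix between $\up_{k-1}^{(\M)}$ and their partners unitriangular in a topological order, and you solve by forward substitution.

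The two answers in fact coincide. Both $\xi$'s are supported on the partners $y_x$, and both make the $\up$-part of $d_k(\xi)$ equal to that of $\eta$; a unitriangular system has a unique solution. The paper's path-count parities are exactly the entries of the inverse $(I+N)^{-1}=\sum_{p\ge 0}N^p$ of that matrix over $\mathbb{Z}_2$, because directed paths in your graph $G$ correspond to $\M$-paths between partners.

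Your route is shorter and more elementary: it needs no auxiliary matchings $\M_j$ and no Lemma~\ref{ft}. The paper's route buys an explicit formula for a preimage in the gradient-path language of discrete Morse theory.
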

     \begin{proof}
     	Let $\M$ be an acyclic matching on $S$, such that there exists no $\M$-critical element of rank $(k-1)$. Let $\eta=\sum_{i=1}^{n} s_i x_i$, where $\up_{k-1}^{(\M)}=\{x_1,\ldots,x_m\}$, $\dn_{k-1}^{(\M)}=\{x_{m+1},\ldots,x_{n}\}$, and $s_i\in \mathbb{Z}_2$, for all $i \in \{1,\ldots,n\}$.  Let $(x_j,y_j)\in \M$, for all $j\in \{1,\ldots,m\}$.	 Note that, for all $j\in\{1,\ldots,m\}$, $\M_j=\M\setminus\{(x_j,y_j)\}$ is also an acyclic matching on $S$.
     	For any $j\in \{1,\ldots,m\}$, we define $\overrightarrow{y_j}^{(\M_j)}$ ($\in C_k$), as follows.
     	\[ \overrightarrow{y_j}^{(\M_j)}:= \sum_{y\in S_k} \ell^{(\M_j)}_{(y_j,y)}\cdot y.\]
     
     	We show that, for $\xi= \sum_{j=1}^m s_j\overrightarrow{y_j}^{(\M_j)}$, $d_k(\xi)=\eta$. For any $j\in\{1,\ldots,m\}$, we have
     	\begin{align} \label{c}
     		\nonumber	d_k (\overrightarrow{y_j}^{(\M_j)}) =&d_k(\sum_{y\in S_{k}}\ell^{(\M_j)}_{(y_j,y)}\cdot y)\\
     		\nonumber	= & \sum_{y\in S_{k}}\ell^{(\M_j)}_{(y_j,y)}\cdot d_k(y)\\
     		\nonumber	= &\sum_{y\in S_{k}}\ell^{(\M_j)}_{(y_j,y)}\cdot \sum_{i=1}^n \chi_{(y,x_i)} \cdot x_i\\
     		\nonumber	= & \sum_{i=1}^n\sum_{y\in S_{k}}\ell^{(\M_j)}_{(y_j,y)}\cdot \chi_{(y,x_i)} \cdot x_i\\
     	    \nonumber		= & \sum_{i=1}^m\sum_{y\in S_{k}}\ell^{(\M_j)}_{(y_j,y)}\cdot \chi_{(y,x_i)} \cdot x_i+ \sum_{i=m+1}^n\sum_{y\in S_{k}}\ell^{(\M_j)}_{(y_j,y)}\cdot \chi_{(y,x_i)} \cdot x_i\\
     	    		=& \sum_{i=1}^m \sum_{y\in \nabla(x_i)}\ell^{(\M_j)}_{(y_j,y)} \cdot x_i	+ \sum_{i=m+1}^n \sum_{y\in \nabla(x_i)}\ell^{(\M_j)}_{(y_j,y)} \cdot x_i.	
     	\end{align}
   Note that, since $x_j\lessdot y_j$, there exists only one trajectory $P$, which is the trivial trajectory $P: y_j$, such that $\operatorname{init}(P)=y_j$ and $\operatorname{term}(P)\in \nabla(x_j)$, otherwise it violates the acyclicity of $\M$. In this case, $\operatorname{init}(P)=\operatorname{term}(P)=y_j$. Hence, $\sum_{y\in \nabla(x_j)}\ell^{(\M_j)}_{(y_j,y)}=1$. Now, since $y_j \in \crit_k^{(\M_j)}$, and, for any $x_i$, such that $i\in \{1,\ldots,m\}\setminus \{j\}$,  $x_i\in \up_{k-1}^{(\M_j)}$, from Lemma~\ref{ft}, we have $\sum_{y\in \nabla(x_i)}\ell^{(\M_j)}_{(y_j,y)}=0$. Hence Equation~\eqref{c} becomes
     	
     	\begin{align}  \label{a} 
     		d_k (\overrightarrow{y_j}^{(\M_j)})	= x_j +   \sum_{i=m+1}^n \sum_{y\in \nabla(x_i)}\ell^{(\M_j)}_{(y_j,y)} \cdot x_i	.
     	\end{align}
 
  Now,		
     	\begin{align}	\label{b}
     	\nonumber   d_k ( \sum_{j=1}^m s_j \overrightarrow{y_j}^{(\M_j)})=&\sum_{j=1}^m s_jd_k (\overrightarrow{y_j}^{(\M_j)}) \\
     		\nonumber	=&	\sum_{j=1}^m s_j \left(x_j +   \sum_{i=m+1}^n \sum_{y\in \nabla(x_i)}\ell^{(\M_j)}_{(y_j,y)} \cdot x_i	 \right) \text{ (from Equation~\eqref{a})}\\
     		\nonumber	=&	\sum_{j=1}^m s_j x_j +  \sum_{j=1}^m s_j \left( \sum_{i=m+1}^n \sum_{y\in \nabla(x_i)}\ell^{(\M_j)}_{(y_j,y)} \cdot x_i \right)\\
     		\nonumber	=&	\sum_{j=1}^m s_j x_j +   \sum_{i=m+1}^n \left( \sum_{j=1}^m  \sum_{y\in \nabla(x_i)}s_j \cdot \ell^{(\M_j)}_{(y_j,y)}  \right)\cdot x_i\\
     		\nonumber	=&	\sum_{i=1}^m s_i x_i +   \sum_{i=m+1}^n \left( \sum_{j=1}^m  \sum_{y\in \nabla(x_i)}s_j \cdot \ell^{(\M_j)}_{(y_j,y)}  \right)\cdot x_i\\
     		\nonumber	=&	\sum_{i=1}^m s_i x_i + \sum_{i=m+1}^n s_ix_i+  \sum_{i=m+1}^n \left( \sum_{j=1}^m  \sum_{y\in \nabla(x_i)}s_j \cdot \ell^{(\M_j)}_{(y_j,y)} +s_i \right)\cdot x_i\\
     		\nonumber	=&	\eta +  \sum_{i=m+1}^n t_i \cdot x_i \text{ (where, $t_i=\sum_{j=1}^m  \sum_{y\in \nabla(x_i)}s_j \cdot \ell^{(\M_j)}_{(y_j,y)} +s_i$)}\\
     		\implies  d_k ( \sum_{j=1}^m s_j \overrightarrow{y_j}^{(\M_j)})+\eta=& \sum_{i=m+1}^n t_i \cdot x_i.
     		\end{align}
     		
     		Now, since 
     		\begin{align*}
     		d_{k-1}(\sum_{i=m+1}^n t_i \cdot x_i)=&d_{k-1}( d_k ( \sum_{j=1}^m s_j \overrightarrow{y_j}^{(\M_j)})+\eta)\\
     		=& 	d_{k-1}( d_k ( \sum_{j=1}^m s_j \overrightarrow{y_j}^{(\M_j)}))+d_{k-1}(\eta)\\
     		=&	d_{k-1}( d_k ( \sum_{j=1}^m s_j \overrightarrow{y_j}^{(\M_j)}))\\
     		=& 0 \text{ (by Lemma~\ref{poincare})}, 
     		\end{align*}
     by Lemma~\ref{dn}, $\sum_{i=m+1}^n t_i \cdot x_i=0$. Hence from Equation~\eqref{b}, we get
    \begin{align*}
    	  &d_k ( \sum_{j=1}^m s_j \overrightarrow{y_j}^{(\M_j)})+\eta=0\\
    	  \implies &d_k ( \sum_{j=1}^m s_j \overrightarrow{y_j}^{(\M_j)})=\eta.
    \end{align*}	
     \end{proof}

    Now we prove our main result (Theorem~\ref{tmain}). 
	\begin{proof}[Proof of Theorem~\ref{tmain}]
	    Let $S_k=\{y_1,\ldots, y_m\}$ and $S_{k-1}=\{x_1,\ldots x_n\}$. Let $(d_k)_M$ be the matrix representation of the linear map $d_k: C_k \rightarrow C_{k-1}$, i.e., 
	    \[(d_k)_M=(a_{ij})_{n\times m}, \text{ where}\]
	    \[a_{ij}=\begin{cases}
	    	1, \text{ if } x_i \lessdot y_j,\\
	    	0, \text{otherwise.}
	    \end{cases}\]

	    Now, let us assume that $S$ is $2$-colourable and $\psi: S_k \rightarrow \mathbb{Z}_2$ be a function, such that $\psi$ gives different values on the adjacent elements. Note that, since any element of rank $(k-1)$ is covered by exactly two elements in $S$, there are exactly two $1$-s in each row of $(d_k)_M$ and rest of the entries are $0$ in that row. Therefore, for each $i\in \{1,\ldots,n\}$, we have
	   
	    \[a_{i1}\psi(y_1)+\cdots+ a_{im}\psi(y_m)=1.\]
	    Which implies, $(\psi(y_1), \psi(y_2), \ldots , \psi(y_m))^T$ is a solution of the equation $(d_n)_M X= B$, where $B= (\small\underbrace{1,1,\ldots, 1}_{n \text{ times}})^T$. Conversely, for any solution $(t_1,t_2,\ldots, t_m)^T$ of the equation $(d_n)_M X= B$, the function $\psi: S_k\rightarrow \mathbb{Z}_2$, defined by $\psi(y_j)=t_j$, for all $j\in \{1,\ldots,m\}$, gives different values for the adjacent elements. So, $S$ is $2$-colourable if and only if the equation $(d_n)_M X= B$ has a solution.
	    
	    Let $\eta= \sum_{i=1}^n x_i~(\in C_{k-1})$. Now, since any element of rank $(k-2)$ is covered by an even number of elements of rank $(k-1)$, we have $d_{k-1}(\eta)=0$. Therefore, by Lemma~\ref{bd}, there exists some $\xi \in C_k$, such that $d_k(\xi)=\eta$. Now, if $\xi=\sum_{j=1}^m s_jy_j$, $(s_1,s_2,\ldots,s_m)^T$ is a solution of the equation $(d_n)_M X= B$. Hence, $S$ is $2$-colourable.
	    \end{proof}
	    
	    Now we prove Theorem~\ref{planegraph} as an application of Theorem~\ref{tmain}. The only thing that needs a justification that the face poset of an Eulerian planar graph admits an acyclic matching that matches all the edges (i.e., the elements of rank $1$ of the face poset). 
	    \begin{proof}[Proof of Theorem~\ref{planegraph}]
	    Let $G$ be an Eulerian planar graph. Let $S^G$ be the face poset of $G$. We consider the planar dual $G^*$ of $G$, that is $G^*$ is a (multi-)graph, with $V(G^*)=F(G)$, and $E(G^*)=E(G)$, and each edge in $G^*$ corresponds to a common edge between a pair of faces of $G$.
	    
	    Let $T$ be a spanning tree of $G$. It is well-known that the subgraph $T^*$ of $G^*$ induced by the edge set $E(G^*)\setminus E(T)$ is a spanning tree of $G^*$.
	    
	    Let $\M$ and $\M^*$ be acyclic matchings on $\mathcal{F}(T)$ and $\mathcal{F}(T^*)$, respectively, such that all the edges of $T$ and $T^*$ are matched in $\M$ and $\M^*$. Proposition~\ref{tree} asserts the existence of such matchings. We may verify that $\M_0= \M \cup \{(x,y): x\in E(G), y \in F(G), (y,x)\in \M^*\}$ is an acyclic matching on $S^G$.  We observe that, all the elements of $S^G$ of rank $1$ are matched by $\M_0$. This implies, the face poset of $G$ is a sphere-like poset.
	    \end{proof}

\bibliographystyle{plainurl}
\bibliography{colorref.bib}	
	
\end{document}